\newtheorem{theorem}{Theorem}[section]
\newtheorem{lemma}[theorem]{Lemma}
\begin{document}

	\title[Congruences for $\overline{spt2}(n)$]{Extending Congruences for the number of smallest parts in overpartitions with smallest part even}

	\author{Robson da Silva}
	\address{Universidade Federal de S\~ao Paulo, S\~ao Jos\'e dos Campos, SP 12247--014, Brazil}
	\email{silva.robson@unifesp.br}

\subjclass[2010]{11P83, 05A17}
	
\keywords{partitions, overpartitions, congruences, spt function}

\maketitle
	
\begin{abstract}
In a recent paper, Jin, Liu, and Xia \cite{JLX} presented some modulo 4 congruences for $\overline{spt2}(n)$, the number of smallest parts in the overpartitions of $n$ where the smallest part is even and is not overlined. In this paper, we extend the list of such congruences in two directions. First, we prove some new individual congruences for $\overline{spt2}(n)$. Then, we provide a number of infinite families of Ramanujan-like congruences satisfied by $\overline{spt2}(n)$.
\end{abstract}

\section{Introduction}  

We recall that an overpartition \cite{CL} is a partition in which the first occurrence of each part may be overlined. Garvan and Jennings-Shaffer \cite{Garvan} introduced the function $\overline{spt2}(n)$, which is the number of smallest parts in the overpartitions of $n$ where the smallest part is even and is not overlined. For instance, considering the 14 overpartitions of $4$, namely
\begin{align*}
	&4, \, \overline{4}, \, 3+1, \, \overline{3}+1, \, 3 + \overline{1}, \, \overline{3}+\overline{1}, \, 2+2, \, \overline{2}+2, \, 2+1+1, \\
	& \overline{2}+1+1, \, 2 + \overline{1}+1, \, \overline{2}+\overline{1}+1, \, 1+1+1+1, \, \overline{1}+1+1+1,
\end{align*}
we see that $\overline{spt2}(4)=3$.

The study of arithmetic properties of partition functions emerged as a vibrant area of research as Ramanujan \cite{Ramanujan} proved several divisibility properties of $p(n)$, including:
$$\begin{array}{c}
	p(5n+4) \equiv 0 \ (\bmod \ 5), \\
	p(7n+5) \equiv 0 \  (\bmod \ 7), \\
	p(11n+6) \equiv 0 \  (\bmod \ 11), \\
\end{array}$$
where $p(n)$ is the number of partitions of $n$. After Ramanujan, significant achievements have been made by many authors for several functions that enumerate partitions satisfying certain restrictions. For example, Bringmann, Lovejoy, and Osburn \cite{BLO} proved that the partition function $\overline{spt2}(n)$ satisfies the following set of Ramanujan-like congruences: for all $n \geq 0$,
\begin{align*}
	\overline{spt2}(3n) \equiv 0 \pmod{3},\\
	\overline{spt2}(3n+1) \equiv 0 \pmod{3},\\
	\overline{spt2}(5n+3) \equiv 0 \pmod{5}.
\end{align*}

Very recently, Jin, Liu, and Xia \cite{JLX} presented some modulo 4 congruences for $\overline{spt2}(n)$, including
\begin{align}
	\overline{spt2}(8n+3) \equiv 0 \pmod{4}, \nonumber  \\
	\overline{spt2}(16n+14) \equiv 0 \pmod{4}, \label{J2} \\
	\overline{spt2}(32n+28) \equiv 0 \pmod{4}. \nonumber 
\end{align}

Our goal in this paper is to extend the congruences modulo $4$ satisfied by $\overline{spt2}(n)$. This is done in two directions. First, we prove some new individual congruences for $\overline{spt2}(n)$. Then, we provide a number of infinite families of Ramanujan-like congruences satisfied by $\overline{spt2}(n)$. The new individual congruences are the following.

\begin{theorem}
	For all $n \geq 0$,
	\begin{align}
		\overline{spt2}(36n+30) \equiv 0 \pmod{4}, \label{cg1} \\
		\overline{spt2}(48n+34) \equiv 0 \pmod{4}, \label{cg2} \\
		\overline{spt2}(72n+42) \equiv 0 \pmod{4}. \label{cg4} 
	\end{align}
	\label{Th1}
\end{theorem}

It turns out that much more is true about $\overline{spt2}(n)$ modulo 4, and our elementary approach to proving the congruences in Theorem \ref{Th1} yields the insights needed to see this. Thus, we state the following infinite families of non--nested Ramanujan--like congruences modulo 4 satisfied by $\overline{spt2}(n)$.

\begin{theorem}
	For all $j \geq 0$ and all $n \geq 0$,
	\begin{align}
		\overline{spt2}(2^{j+4}n+7\cdot 2^{j+1}) & \equiv 0 \pmod{4}, \label{if1} \\ 
		\overline{spt2}(2^{j+2}9n+15\cdot 2^{j+1}) & \equiv 0 \pmod{4}, \label{if2}\\
		\overline{spt2}(2^{j+4}3n+17\cdot 2^{j+1}) & \equiv 0 \pmod{4},  \label{if3} \\
		\overline{spt2}(2^{j+3}9n+21\cdot 2^{j+1}) &  \equiv 0 \pmod{4}, \label{if6} \\
		\overline{spt2}(2^{j+4}5n+17\cdot 2^{j+1}) & \equiv 0 \pmod{4},  \label{if4} \\
		\overline{spt2}(2^{j+4}5n+33\cdot 2^{j+1}) & \equiv 0 \pmod{4}.  \label{if5}
	\end{align}
	\label{Th2}
\end{theorem}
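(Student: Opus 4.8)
The plan is to prove all six families simultaneously by induction on $j$, exploiting the fact that each family at $j=0$ is a congruence we already possess. Factoring the modulus and residue, the six families assert $\overline{spt2}\bigl(2^{j+1}(Mn+r)\bigr)\equiv 0\pmod 4$ for $(M,r)$ equal to $(8,7),(18,15),(48,17),(36,21),(80,17),(80,33)$, corresponding to \eqref{if1}, \eqref{if2}, \eqref{if3}, \eqref{if6}, \eqref{if4}, \eqref{if5} respectively. Setting $j=0$ recovers exactly \eqref{J2} and \eqref{cg1}, \eqref{cg2}, \eqref{cg4}, \eqref{cg6}, \eqref{cg7} of Theorem \ref{Th1}, so all six base cases are already in hand; only a uniform inductive step—passing from $2^{j+1}$ to $2^{j+2}$ in the argument—must be supplied.

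I would organize the induction around the collapsed series $A_k(q)=\sum_{n\geq 0}\overline{spt2}(2^k n)\,q^{\,n}$. The key structural fact is that doubling the power of $2$ in the argument is exactly one further $2$-dissection, since $A_{k+1}(q^2)=\tfrac12\bigl(A_k(q)+A_k(-q)\bigr)$ is the even part of $A_k$. In this language each family reads off the coefficients of $A_{j+1}$ on a fixed residue class $r \bmod M$, and the task is to control $A_{k}\bmod 4$ for $k\geq 1$ by a recurrence. I would start from the generating function for $\overline{spt2}(n)$ underlying Theorem \ref{Th1}, reduce it modulo $4$, and reuse its $2$-, $3$-, and $5$-dissections (the moduli $8,18,48,36,80$ force precisely these; the factors $9$ and $3$ come from a $3$- or $9$-dissection and the factor $5$ from a $5$-dissection). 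The engine of the induction is a self-similar $2$-dissection of the theta building blocks: modulo $2$ they reproduce under $q\mapsto q^2$ via the Frobenius relation $F(q)^2\equiv F(q^2)\pmod 2$, while the decomposition $\varphi(q)=\varphi(q^4)+2q\psi(q^8)$ furnishes a controlled factor of $2$. Combining these should produce a recurrence of the form $A_{k+1}\equiv \mathcal R(A_k)\pmod 4$, in which the surviving factor of $2$ upgrades mod-$2$ self-similarity to a mod-$4$ congruence and keeps the modulus flat—this flatness is what makes the families non-nested.

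Granting such a recurrence, the induction closes immediately: the coefficients of $A_1$ on each residue class $r\bmod M$ vanish modulo $4$ by Theorem \ref{Th1}, and the self-reproducing dissection transfers this vanishing from $A_{j+1}$ to $A_{j+2}$, hence to all $A_{j+1}$ with $j\geq 0$. Since the six families differ only in which residue class one isolates after the $3$-, $9$-, and $5$-dissections, a single dissection computation handles all of them in parallel.

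The main obstacle will be establishing the base-level self-similar identity and its $2$-dissection precisely—that is, producing the explicit eta-quotient building blocks for $A_1(q)$ modulo $4$ and verifying that extracting the even part returns the same structural form (up to $q\mapsto q^2$ and a unit) rather than spawning new terms at each level. The delicate point is the bookkeeping of powers of $2$: one must confirm that exactly one factor of $2$ survives each doubling, since it is precisely this factor, together with the mod-$2$ vanishing of the self-reproducing theta part, that yields divisibility by $4$ on the target progressions. Once this structural identity is in place, the remaining extractions are the routine dissections already developed for Theorem \ref{Th1}.
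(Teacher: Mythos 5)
Your overall frame---simultaneous induction on $j$ with the base cases $j=0$ supplied by \eqref{J2} and by \eqref{cg1}, \eqref{cg2}, \eqref{cg4}, \eqref{cg6}, \eqref{cg7}---matches the paper, but your inductive step has a genuine gap exactly where the paper's proof has its one essential ingredient. The paper closes the induction by invoking the internal congruence of Jin, Liu, and Xia recalled in \eqref{IntCong1}, namely $\overline{spt2}(8n+5)\equiv\overline{spt2}(16n+10)\pmod 4$, which is used to halve the argument, passing from $2^{j+2}(\mathrm{odd})$ down to $2^{j+1}(\mathrm{odd})$ at no cost modulo $4$; that single known relation is the entire engine of all six inductions. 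Your proposal never mentions this congruence, nor any substitute arithmetic relation between $\overline{spt2}(2m)$ and $\overline{spt2}(m)$. Instead you ask for a self-similar recurrence $A_{k+1}\equiv\mathcal{R}(A_k)\pmod 4$ for $A_k(q)=\sum_{n\ge 0}\overline{spt2}(2^k n)q^n$, to be produced by iterated $2$-dissection, and you explicitly defer its construction (``granting such a recurrence,'' ``the main obstacle will be establishing the base-level self-similar identity''). That deferred step is not a technicality: it is the whole theorem. Nothing in the paper's computations suggests the $2$-dissections are self-reproducing modulo $4$; on the contrary, the extracted series change structural form at every level (e.g.\ $\sum\overline{spt2}(4n+2)q^n\equiv 2f_4^3-f_2^3f_4^2/f_1^2$, then $\sum\overline{spt2}(8n+2)q^n\equiv 2f_2^3-f_1^2f_4$, then $\sum\overline{spt2}(16n+2)q^n\equiv 2f_1^3-\psi(-q)$, all modulo $4$), with new eta-quotients appearing at each extraction. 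Controlling such output uniformly over an unbounded number of dissection levels is precisely what cannot be done by ``routine'' means, and it is exactly what the internal congruence \eqref{IntCong1} lets the paper avoid. As written, your argument proves nothing beyond the base cases.

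Two smaller points. First, the relation $A_{k+1}(q^2)=\tfrac12\bigl(A_k(q)+A_k(-q)\bigr)$ that you present as the key structural fact is mere re-indexing ($[q^n]A_{k+1}=[q^{2n}]A_k$ holds by definition), so it carries no information by itself; the factor-of-$2$ bookkeeping you sketch via $\phi(q)=\phi(q^4)+2q\psi(q^8)$ and the Frobenius congruence is a heuristic, not a step one could complete mechanically. Second, your factorizations are off by a factor of $2$ in three cases: $2^{j+4}3n+17\cdot 2^{j+1}=2^{j+1}(24n+17)$ and $2^{j+4}5n+\{17,33\}\cdot 2^{j+1}=2^{j+1}(40n+\{17,33\})$, so the moduli for \eqref{if3}, \eqref{if4}, \eqref{if5} are $24$, $40$, $40$, not $48$, $80$, $80$. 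This does not affect which base cases you identify, but it would matter the moment one tracks residue classes through an actual dissection.
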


In the theorem below we present a different type of infinite family of Ramanujan-like congruence for $\overline{spt2}(n)$.

\begin{theorem} Let $p > 3$ be a prime such that $p \equiv 5 \mbox{ or } 7 \pmod{8}$. Then for all $k,m \geq 0$ with $p \nmid m$, 
	\begin{equation*}
		\overline{spt2}\left(32p^{2k+1}m + 24p^{2k+2} \right) \equiv 0 \pmod{4}.
	\end{equation*}
	\label{Th3}
\end{theorem}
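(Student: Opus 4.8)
The plan is to reduce the congruence to the \emph{vanishing} of a representation number for the binary quadratic form $x^2 + 2y^2$, exploiting that the hypothesis $p \equiv 5$ or $7 \pmod 8$ is exactly the condition $\left(\frac{-2}{p}\right) = -1$. Concretely, I expect that the elementary machinery developed for Theorems \ref{Th1} and \ref{Th2} yields, modulo $4$, an identity of the shape
$$\sum_{n \geq 0} \overline{spt2}(32n + 24)\, q^n \equiv 2\,\Theta(q) \pmod 4,$$
where $\Theta(q) = \sum_{n \geq 0} c(n)\, q^n$ and $c(n)$ is a positive constant multiple of the number of representations of $4n + 3$ by $x^2 + 2y^2$ (equivalently, $\Theta$ is the odd part of $\varphi(q)\varphi(q^2)$, i.e.\ $\psi(q)\psi(q^2)$ up to a shift). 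Establishing this reduction --- tracking the single factor of $2$ and pinning down the correct theta quotient --- is the step I expect to absorb most of the work, and it is where the dissection identities underlying the earlier theorems are indispensable.

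Granting the reduction, I would first rewrite the progression to expose the relevant integer. Writing the argument as $32n + 24$ forces $n = p^{2k+1}m + \tfrac{3}{4}\!\left(p^{2k+2}-1\right)$, and a direct computation gives
$$4n + 3 = 4p^{2k+1}m + 3p^{2k+2} = p^{2k+1}\!\left(4m + 3p\right).$$
Setting $u = 4m + 3p$, the integer $N := 4n+3 = p^{2k+1}u$ is odd, and since $p > 3$ and $p \nmid m$ one checks $\gcd(u,p) = \gcd(4m,p) = 1$. Hence $v_p(N) = 2k+1$ is \emph{odd} and the cofactor $u$ is prime to $p$. Note also $N \equiv 3 \pmod 4$, so every representation $N = x^2 + 2y^2$ automatically has $x,y$ both odd, matching the theta function $\Theta$.

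The arithmetic core is then the representation theory of $x^2 + 2y^2$. Since $\mathbb{Q}(\sqrt{-2})$ has class number one, the number of representations of an odd $N$ is given by the standard divisor-character formula $R(N) = 2\sum_{d \mid N} \left(\frac{-2}{d}\right)$, a multiplicative function of $N$ whose local factor at a prime $\ell$ with $\ell^{a}\,\|\,N$ is $\sum_{i=0}^{a}\left(\frac{-2}{\ell}\right)^{i}$. For $N = p^{2k+1}u$ the factor at $p$ is $\sum_{i=0}^{2k+1}\left(\frac{-2}{p}\right)^{i}$, and the hypothesis $p \equiv 5,7 \pmod 8$ gives $\left(\frac{-2}{p}\right) = -1$, so this factor equals $\sum_{i=0}^{2k+1}(-1)^i = 0$ because the number of summands $2k+2$ is even. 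Therefore $R(N) = 0$, the coefficient $c(n)$ vanishes, and the reduction yields $\overline{spt2}\!\left(32p^{2k+1}m + 24p^{2k+2}\right) \equiv 2\cdot 0 \equiv 0 \pmod 4$, as claimed.

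As an alternative to quoting the class-number-one formula, I would be prepared to run the vanishing purely through the $p$-dissection of $\Theta$: the subseries of $\Theta$ supported on indices with the relevant power of $p$ can be matched, via the multiplicativity built into $\psi(q)\psi(q^2)$, against a shorter alternating character sum, and the same cancellation $1 - 1 + \cdots - 1 = 0$ appears. The main obstacle throughout remains the first step --- securing the exact mod-$4$ theta representation and the accompanying index bookkeeping --- since once $\Theta$ is identified the conclusion is forced by the single sign $\left(\frac{-2}{p}\right) = -1$.
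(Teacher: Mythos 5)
Your proposal follows the same route as the paper, and everything you actually argue is correct; the one shortfall is that the central reduction is asserted (``I expect that\dots'') rather than derived. That reduction is not something you would have had to build from scratch: it is precisely what the paper establishes on the way to Theorem \ref{Th1}. In proving \eqref{cg3}, the paper starts from \cite[Eq. (4.16)]{JLX}, applies the dissection \eqref{2diss2}, and arrives at \eqref{eq4},
\[
\sum_{n=0}^{\infty}\overline{spt2}(32n+24)\,q^{n} \equiv 2f_2^3f_4^3 \pmod{4},
\]
and then Lemma \ref{L6} (equivalently, $f_1^3\equiv\psi(q)\pmod{2}$) converts this into
\[
\sum_{n=0}^{\infty}\overline{spt2}(32n+24)\,q^{n} \equiv 2\sum_{k,l\geq 0}q^{k(k+1)+2l(l+1)} \pmod{4}.
\]
This is exactly your $2\Theta(q)$ with $\Theta(q)=\psi(q^2)\psi(q^4)$ (your normalization ``$\psi(q)\psi(q^2)$ up to a shift'' is off by the dilation $q\mapsto q^2$, but harmlessly so). Since every representation of an integer $\equiv 3 \pmod{4}$ by $x^2+2y^2$ has both $x$ and $y$ odd, the coefficient of $q^n$ in $\Theta$ is $\tfrac{1}{4}R(4n+3)$, as you predicted. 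So the identity you grant is true, no step of your plan would fail, and your index bookkeeping $4n+3=p^{2k+1}(4m+3p)$ with $p\nmid 4m+3p$ is identical to the paper's; the gap is one of self-containedness, filled by the paper's own \eqref{eq4}.

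Where you genuinely diverge is the arithmetic endgame. You quote the class-number-one divisor formula $R(N)=2\sum_{d\mid N}\left(\frac{-2}{d}\right)$ for odd $N$ and kill the local factor $\sum_{i=0}^{2k+1}\left(\frac{-2}{p}\right)^i=0$ at $p$, using that $\left(\frac{-2}{p}\right)=-1$ and that the number of summands is even. The paper needs only the weaker, more elementary fact that if $\left(\frac{-2}{p}\right)=-1$ then $\nu_p(N)$ is even for every $N$ of the form $x^2+2y^2$, so the odd valuation $\nu_p(4n+3)=2k+1$ rules out any representation at all. Both arguments rest on the same underlying structure (unique factorization in $\mathbb{Z}[\sqrt{-2}]$) and both conclude that the coefficient vanishes identically --- not merely that it is even, which is all the modulus $4$ would strictly require; your version buys an exact representation count that is never needed, while the paper's stays at the level of non-representability.
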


This paper is organized as follows. In Section \ref{prelim}, we recall some basic identities that are used in the proof of Theorem \ref{Th1}. Section \ref{Thm1} is devoted to the proof of Theorem \ref{Th1}. The proofs of the infinite families of Ramanujan--like congruences shown in Theorem \ref{Th2} are presented in Section \ref{Thm2}. The proof of Theorem \ref{Th3} is the content of Section \ref{Thm3}.

\section{Preliminaries}
\label{prelim}

In order to accomplish the above goals, we require a few classical tools.  First, we recall Ramanujan's theta functions
\begin{align}
	f(a,b) & :=\sum_{n=-\infty}^\infty a^\frac{n(n+1)}{2}b^\frac{n(n-1)}{2}, \mbox{ for } |ab|<1, \nonumber \\
	\phi(q) & := f(q,q) = \sum_{n=-\infty}^{\infty} q^{n^2} = \frac{(q^2;q^2)_{\infty}^5}{(q;q)_{\infty}^2(q^4;q^4)_{\infty}^2}, \label{Rama1} 
	\\
	\psi(q) & := f(q,q^3) = \sum_{n=0}^{\infty} q^{n(n+1)/2} = \frac{(q^2;q^2)_{\infty}^2}{(q;q)_{\infty}}, 
	\label{Rama2}
\end{align}
where $(a;q)_\infty := (1-a)(1-aq)(1-aq^{2})\cdots$
is the usual {$q$-Pochhammer symbol. These functions satisfy many interesting properties (see Entries 18, 19, and 22 in \cite{Berndt}), including:
		\begin{align}
			\phi(-q) & = \frac{(q;q)_{\infty}^2}{(q^2;q^2)_{\infty}}, 
			\nonumber 
			\\
			\psi(-q) & = \frac{(q;q)_{\infty}(q^4;q^4)_{\infty}}{(q^2;q^2)_{\infty}}. \label{17.02.1}
		\end{align}
		
		Throughout the remainder of this paper, we define 
		$$f_k := (q^k;q^k)_{\infty}$$
		in order to shorten the notation.
		
		The proof of Theorem \ref{Th1} requires a few well-known 2- and 3-dissections.
		
		\begin{lemma} We have
			\begin{align}
				{f_{1}^{2}} & =  \frac{f_{2}f_8^5}{f_{4}^{2}f_{16}^{2}} -2q\frac{f_{2}f_{16}^{2}}{f_{8}}, \label{2diss1}  \\
				\frac{1}{f_{1}^{2}} & =  \frac{f_{8}^{5}}{f_{2}^{5}f_{16}^{2}} + 2q\frac{f_{4}^{2}f_{16}^{2}}{f_{2}^{5}f_{8}}. \label{2diss2}
			\end{align}
			\label{L1}
		\end{lemma}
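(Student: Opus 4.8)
The plan is to deduce both identities from a single base 2-dissection of $\phi$, namely
\[
\phi(q) = \phi(q^4) + 2q\,\psi(q^8),
\]
which I would establish by splitting the defining sum $\phi(q) = \sum_{n=-\infty}^{\infty} q^{n^2}$ according to the parity of $n$. The terms with $n = 2m$ contribute $\sum_m q^{4m^2} = \phi(q^4)$, while the terms with $n = 2m+1$ contribute $q\sum_m q^{4m(m+1)} = q\sum_m (q^8)^{m(m+1)/2}$; here the substitution $m \mapsto -m-1$ pairs the summands, so the bilateral sum equals $2\psi(q^8)$, giving the odd part $2q\,\psi(q^8)$. Replacing $q$ by $q^4$ in \eqref{Rama1} and $q$ by $q^8$ in \eqref{Rama2} records the product forms $\phi(q^4) = f_8^5/(f_4^2 f_{16}^2)$ and $\psi(q^8) = f_{16}^2/f_8$, which are precisely the two building blocks appearing in \eqref{2diss1} and \eqref{2diss2}.

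For \eqref{2diss1} I would use $f_1^2 = f_2\,\phi(-q)$, read off from the product formula $\phi(-q) = f_1^2/f_2$ stated just before \eqref{17.02.1}. Substituting $q \mapsto -q$ in the base dissection flips only the sign of the odd term, since $\phi((-q)^4) = \phi(q^4)$ and $2(-q)\psi((-q)^8) = -2q\,\psi(q^8)$, yielding $\phi(-q) = \phi(q^4) - 2q\,\psi(q^8)$. Multiplying through by $f_2$ and inserting the product forms above then gives $f_1^2 = f_2 f_8^5/(f_4^2 f_{16}^2) - 2q\, f_2 f_{16}^2/f_8$, which is exactly \eqref{2diss1}.

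The reciprocal identity \eqref{2diss2} is the more delicate one, because directly inverting a two-term dissection is awkward; the key step is to rationalize first. I would write $1/f_1^2 = 1/(f_2\,\phi(-q)) = \phi(q)/(f_2\,\phi(q)\phi(-q))$ and compute $\phi(q)\phi(-q) = f_2^4/f_4^2$ simply by multiplying the product representation of $\phi(q)$ from \eqref{Rama1} against that of $\phi(-q)$. This collapses the reciprocal to $1/f_1^2 = (f_4^2/f_2^5)\,\phi(q)$, and now the base dissection of $\phi(q)$ itself (not of $\phi(-q)$) can be substituted directly: $(f_4^2/f_2^5)\bigl(\phi(q^4) + 2q\,\psi(q^8)\bigr)$ reduces, after inserting the product forms and cancelling the factor $f_4^2$ in the first term, to $f_8^5/(f_2^5 f_{16}^2) + 2q\, f_4^2 f_{16}^2/(f_2^5 f_8)$, which is \eqref{2diss2}. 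The only genuine obstacle is spotting the rationalization $\phi(q)\phi(-q) = f_2^4/f_4^2$; once that is in hand, both identities reduce to routine bookkeeping with the product forms.
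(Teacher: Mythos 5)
Your proof is correct and follows essentially the same route as the paper: both rest on the Entry 25 dissection $\phi(q)=\phi(q^4)+2q\,\psi(q^8)$, specialized at $-q$ and multiplied by $f_2$ to get \eqref{2diss1}, and combined with the product form of $\phi(q)$ to get \eqref{2diss2}. The only differences are cosmetic --- you prove the dissection from scratch by parity-splitting the theta sum where the paper cites Berndt, and your rationalization via $\phi(q)\phi(-q)=f_2^4/f_4^2$ is an unnecessary detour, since \eqref{Rama1} already gives $1/f_1^2=(f_4^2/f_2^5)\,\phi(q)$ directly, which is exactly how the paper proceeds.
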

		
		\begin{proof}
			By Entry 25 (i), (ii), (v), and (vi) in \cite[p. 40]{Berndt}, we have
			\begin{equation}
				\phi(q) = \phi(q^4) + 2q\psi(q^8),
				\label{4.12}
			\end{equation}
			and
			\begin{equation}
				\phi(q)^2 = \phi(q^2)^2 + 4q\psi(q^4)^2.
				\label{Eq1}
			\end{equation}
			Identity in \eqref{2diss1} follows from \eqref{Eq1} after replacing $q$ by $-q$.
			By \eqref{Rama1} and \eqref{Rama2} we can rewrite \eqref{4.12} in the form
			\begin{equation*}
				\frac{f_{2}^{5}}{f_{1}^{2}f_{4}^{2}} = \frac{f_{8}^{5}}{f_{4}^{2}f_{16}^{2}} + 2q\frac{f_{16}^{2}}{f_{8}},
			\end{equation*}
			from which we obtain \eqref{2diss2} after multiplying both sides by $\frac{f_4^2}{f_2^5}$.
		\end{proof}
		
		\begin{lemma} We have
			\begin{align}
				f_{1}^{3} & = \displaystyle\frac{f_{6}f_{9}^6}{f_{3}f_{18}^3} + 4q^3\displaystyle\frac{f_{3}^2f_{18}^{6}}{f_{6}^2f_{9}^3} - 3qf_{9}^{3}, \label{3diss1} \\
				\displaystyle\frac{f_{4}}{f_{1}} & = \displaystyle\frac{f_{12}f_{18}^{4}}{f_{3}^3f_{36}^2} + q\displaystyle\frac{f_{6}^2f_{9}^{3}f_{36}}{f_{3}^4f_{18}^2} + 2q^2 \displaystyle\frac{f_{6}f_{18}f_{36}}{f_{3}^3}, \label{3diss2} \\
				f_{1}f_{2} & =  \displaystyle\frac{f_{6}f_{9}^{4}}{f_{3}f_{18}^2} - qf_{9}f_{18} -2q^2 \displaystyle\frac{f_{3}f_{18}^{4}}{f_{6}f_{9}^2}. \label{3diss3}
			\end{align}
			\label{L2}
		\end{lemma}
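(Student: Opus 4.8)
The plan is to treat all three identities as instances of the classical principle that an eta-quotient can be $3$-dissected by expressing it through Ramanujan's theta functions and then splitting the defining series according to the residue of the exponent modulo $3$. I would begin with \eqref{3diss1}, which is the most transparent. Invoking Jacobi's identity $f_1^3 = \sum_{n\ge 0}(-1)^n(2n+1)q^{n(n+1)/2}$ and sorting the sum by $n \bmod 3$, the class $n = 3m+1$ collapses, again by Jacobi, to $-3q\sum_{m}(-1)^m(2m+1)q^{9m(m+1)/2} = -3q f_9^3$, which is exactly the middle term. This is the easy, self-contained part of the argument.

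The classes $n \equiv 0$ and $n \equiv 2 \pmod 3$ remain, and together they account for all exponents $\equiv 0 \pmod 3$ (no exponent $\equiv 2 \pmod 3$ ever occurs, since triangular numbers avoid the residue $2$ modulo $3$). I would identify this remaining function of $q^3$ with $\frac{f_6 f_9^6}{f_3 f_{18}^3} + 4q^3\frac{f_3^2 f_{18}^6}{f_6^2 f_9^3}$ by recognizing it as a combination of two Jacobi-triple-product theta functions $f(a,b)$ and converting each into its product form $(-a;ab)_\infty(-b;ab)_\infty(ab;ab)_\infty$. Matching these products to the stated eta-quotients is the only nontrivial bookkeeping, and I expect it to be the main obstacle: it requires selecting the correct pair of theta functions and then simplifying the resulting infinite products, rather than any conceptual difficulty.

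For \eqref{3diss3} I would proceed analogously, writing $f_1 f_2$ through a theta function (for instance via $\psi(-q)$ as in \eqref{17.02.1}, or directly via the product form of the relevant $f(a,b)$) and splitting by residues modulo $3$; here all three residue classes survive, producing the three terms on the right. For \eqref{3diss2} I would first reduce $\frac{f_4}{f_1}$ to a theta quotient, using $\frac{f_4}{f_1} = \frac{1}{(q;q^2)_\infty (q^2;q^4)_\infty}$, and then either $3$-dissect directly or deduce it from the dissections already obtained by forming suitable products and quotients.

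As an alternative to the product bookkeeping, each identity can be settled by the theory of modular forms: after clearing denominators both sides become eta-quotients on a common congruence subgroup, so equality reduces to checking finitely many $q$-coefficients up to a Sturm bound. I regard the elementary theta-function route as preferable and more in keeping with the paper's methods, and I anticipate that the single genuine difficulty throughout is the explicit identification of each residue-class component with the prescribed eta-quotient.
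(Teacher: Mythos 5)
Your treatment of the middle term of \eqref{3diss1} is correct, but the step you defer as ``bookkeeping'' is precisely where the proposed tool breaks down, and it is in fact the entire content of the identity. After removing the class $n\equiv 1\pmod 3$ (which does give $-3qf_9^3$), the two surviving classes of Jacobi's series combine, via the symmetry $n\mapsto -1-n$, into $\sum_{m\in\mathbb{Z}}(-1)^m(6m+1)\,q^{3m(3m+1)/2}$. The coefficients $6m+1$ grow linearly in $m$, so this is a \emph{weighted} theta series, not a theta function $f(a,b)$: the Jacobi triple product cannot be applied to it, and ``converting each into its product form'' is not an available move. Identifying this series with $\frac{f_6f_9^6}{f_3f_{18}^3}+4q^3\frac{f_3^2f_{18}^6}{f_6^2f_9^3}$ is equivalent to a cubic theta-function identity and needs a genuinely different input. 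The same objection applies to \eqref{3diss3}: $f_1f_2$ is not a single theta series (its coefficient of $q^2$ is $-2$, as the right-hand side of \eqref{3diss3} shows), and it is not $\psi(-q)$ either, since $\psi(-q)=f_1f_4/f_2$; so there is no bilateral series with unit coefficients for you to split by residues. For \eqref{3diss2}, $f_4/f_1=\psi(q)/\phi(-q^2)$ involves a reciprocal of a theta function, and ``either $3$-dissect directly or deduce it from the dissections already obtained'' is not yet an argument. Note also that the paper itself does not prove this lemma: it cites \cite[Eq.\ (14.8.5)]{H} for \eqref{3diss1}, \cite{Yao} for \eqref{3diss2}, and \cite{Sellers2} for \eqref{3diss3}, so any complete proof necessarily goes beyond the paper.

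There is, however, an elementary repair for \eqref{3diss1} and \eqref{3diss3} that uses only the paper's own lemmas: factor $f_1^3=\phi(-q)^2\psi(q)$ and $f_1f_2=\phi(-q)\psi(q)$, then substitute the $3$-dissections of Lemma \ref{L5} and Lemma \ref{L3} and sort by residues modulo $3$. For $f_1f_2$ the three residue classes are \emph{exactly} the three terms of \eqref{3diss3}; for $f_1^3$ the $q^{3n+1}$ class is $qf_9^3-4qf_9^3=-3qf_9^3$, the $q^{3n+2}$ class cancels identically,
\begin{equation*}
-4q\frac{f_3f_9f_{18}}{f_6}\cdot q\frac{f_{18}^2}{f_9}+4q^2\frac{f_3^2f_{18}^4}{f_6^2f_9^2}\cdot\frac{f_6f_9^2}{f_3f_{18}}=0,
\end{equation*}
and the $q^{3n}$ class is exactly $\frac{f_6f_9^6}{f_3f_{18}^3}+4q^3\frac{f_3^2f_{18}^6}{f_6^2f_9^3}$; no further identities are needed. (The choice of factorization matters: $\psi(-q)\phi(-q^2)$, which your $\psi(-q)$ suggestion leads to, produces cross terms that do not collapse without additional theta identities.) No analogous $\phi,\psi$ factorization exists for \eqref{3diss2}, which is presumably why the paper defers to \cite{Yao}; there, your modular-forms/Sturm-bound fallback is the one workable route in your proposal, and it is legitimate provided you verify that the eta quotients involved are holomorphic modular forms on a common congruence subgroup before comparing coefficients up to the bound. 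Given that your preferred route fails, that fallback (or the factorization argument above where it applies) should carry the proof rather than appear as an afterthought.
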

		
		\begin{proof} Identity \eqref{3diss1} appears in \cite[Eq. (14.8.5)]{H}. A proof of \eqref{3diss2} can be found in \cite{Yao}. 
			The identity \eqref{3diss3} was proven in \cite{Sellers2}.
		\end{proof}
		
		\begin{lemma} We have
			\begin{equation}
				\psi(q) = \displaystyle\frac{f_{2}^2}{f_{1}} =  \displaystyle\frac{f_{6}f_{9}^{2}}{f_{3}f_{18}} + q\displaystyle\frac{f_{18}^{2}}{f_{9}}. \label{3dissPsi}
			\end{equation}
			\label{L3}
		\end{lemma}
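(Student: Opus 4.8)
The plan is to prove Lemma \ref{L3}, which is a standard 3-dissection of $\psi(q)$. First I would recall that $\psi(q) = \sum_{n \geq 0} q^{n(n+1)/2}$ collects all triangular numbers, and the key idea is to sort these triangular numbers according to the residue of $n$ modulo $3$. Writing $n = 3m$, $n = 3m+1$, and $n = 3m+2$ separately, one computes the exponent $n(n+1)/2$ in each case: for $n=3m$ one gets $(9m^2+3m)/2 = 3\cdot\tfrac{m(3m+1)}{2}$, for $n=3m+2$ one gets $3\cdot\tfrac{(m+1)(3m+2)}{2}$, and for $n=3m+1$ one gets $(9m^2+9m+2)/2 = 9\cdot\tfrac{m(m+1)}{2}+1$. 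Thus the $n \equiv 0$ and $n \equiv 2$ contributions combine into a single theta-type series in $q^3$, while the $n \equiv 1$ contribution factors as $q$ times a series in $q^9$.

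Next I would recognize these pieces as Ramanujan theta functions. The $n \equiv 1$ sum is exactly $q\sum_{m\geq 0} q^{9\cdot m(m+1)/2} = q\,\psi(q^9)$, which by \eqref{Rama2} equals $q\,f_{18}^2/f_9$, matching the second term on the right-hand side of \eqref{3dissPsi}. The combined $n \equiv 0,2$ contribution is a bilateral-looking sum of the form $\sum q^{3\cdot n(n+1)/2}$ restricted appropriately; I would identify it with $f(q^3,q^6)$ in the general notation $f(a,b)=\sum a^{n(n+1)/2}b^{n(n-1)/2}$, i.e. the general theta function evaluated at parameters whose product gives powers of $q^3$. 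The cleanest route is to express this leftover piece via the general $f(a,b)$ and then invoke the product formula / the known evaluation $f(q,q^3)=\psi(q)$ together with the scaling that yields $f_6 f_9^2/(f_3 f_{18})$. Concretely, I expect the combined term to equal $f(q^6,q^{12})\cdot(\text{correction})$ or more directly the theta function $\psi$ evaluated with a $q^3$ dilation and a shift, which Entry-18/19-type product formulas in \cite{Berndt} convert into the eta-quotient $f_6 f_9^2/(f_3 f_{18})$.

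The main obstacle will be the bookkeeping in the combined $n\equiv 0,2$ term: one must check that as $m$ ranges over the nonnegative integers in the $n=3m$ branch and the $n=3m+2$ branch, the two families of exponents $3\cdot\tfrac{m(3m+1)}{2}$ and $3\cdot\tfrac{(m+1)(3m+2)}{2}$ interleave exactly to reconstitute a bilateral Ramanujan theta function $f(a,b)$ with no gaps or overlaps, so that the Jacobi triple product can be applied to obtain the eta-quotient. Rather than carry out this delicate reindexing by hand, I would instead cite the identity directly: the 3-dissection of $\psi(q)$ in \eqref{3dissPsi} is classical and appears explicitly in Berndt \cite{Berndt} (it follows from the general splitting of $f(a,b)$ given in the Entries referenced for \eqref{Rama1}--\eqref{Rama2}). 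Thus the whole lemma reduces to specializing the standard theta-function dissection, and the proof amounts to the residue-class computation above followed by one invocation of the product representations already recorded in \eqref{Rama2} and \eqref{17.02.1}.
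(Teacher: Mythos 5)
Your proposal is correct in substance but takes a genuinely different route from the paper: the paper's entire ``proof'' of Lemma \ref{L3} is a one-line citation to Hirschhorn \cite[Eq.\ (14.3.3)]{H}, whereas you actually derive the dissection. Your residue computation is right: splitting $\psi(q)=\sum_{n\ge 0}q^{n(n+1)/2}$ according to $n\bmod 3$, the $n=3m+1$ branch gives $q\,\psi(q^9)=q\,f_{18}^2/f_9$ by \eqref{Rama2}, and the two remaining branches combine into a bilateral theta series. Moreover, the ``delicate reindexing'' you chose to avoid is in fact painless: in the $n=3m+2$ branch the exponent is $3(m+1)(3m+2)/2$, and the substitution $m\mapsto -m-1$ turns it into $3m(3m+1)/2$ with $m$ running over the negative integers, so the $n\equiv 0$ and $n\equiv 2$ branches together give exactly $\sum_{m\in\mathbb{Z}}q^{3m(3m+1)/2}=f(q^3,q^6)$, with no gaps or overlaps. (This is your first identification; the hedged alternative ``$f(q^6,q^{12})$ times a correction'' is wrong and should be dropped, as should the description of these exponents as restricted triangular numbers --- they are three times generalized pentagonal numbers.) The Jacobi triple product (Entry 19 of \cite{Berndt}) then yields $f(q^3,q^6)=(-q^3;q^9)_\infty(-q^6;q^9)_\infty(q^9;q^9)_\infty$, and since $(-q^3;q^9)_\infty(-q^6;q^9)_\infty(-q^9;q^9)_\infty=(-q^3;q^3)_\infty=f_6/f_3$ while $(-q^9;q^9)_\infty=f_{18}/f_9$, one gets $f(q^3,q^6)=f_6f_9^2/(f_3f_{18})$, which completes \eqref{3dissPsi}. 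As for what each approach buys: the paper's citation is brief and consistent with how it treats the neighboring Lemmas \ref{L2}, \ref{L4}, and \ref{L5} (all cited, none proved), while your derivation is self-contained and elementary at the cost of invoking the triple product; your fallback option of citing the classical identity outright is also legitimate and is exactly what the paper does, just with \cite{H} in place of \cite{Berndt}.
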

		
		\begin{proof}
			A proof of this identity can be seen in \cite[Eq. (14.3.3)]{H}.
		\end{proof}
		
		\begin{lemma} We have
			\begin{equation*}
				\psi(-q) = \displaystyle\frac{f_{3}f_{12}f_{18}^5}{f_{6}^2f_9^2f_{36}^2} - q\displaystyle\frac{f_{9}f_{36}}{f_{18}}.    
			\end{equation*}
			\label{L4}
		\end{lemma}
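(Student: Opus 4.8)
The plan is to derive this $3$-dissection of $\psi(-q)$ directly from the $3$-dissection of $\psi(q)$ already recorded in Lemma \ref{L3}, simply by replacing $q$ with $-q$. The only subtlety is tracking how each factor $f_k = (q^k;q^k)_\infty$ transforms under this substitution, and this depends on the parity of $k$.

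First I would establish the substitution rules. When $k$ is even, $(-q)^k = q^k$, so $f_k$ is left invariant: $f_k(-q) = f_k$. When $k$ is odd, $(-q)^k = -q^k$, and splitting the defining product over even and odd indices gives
\begin{equation*}
f_k(-q) = (-q^k;-q^k)_\infty = (q^{2k};q^{2k})_\infty\,(-q^k;q^{2k})_\infty = \frac{f_{2k}^3}{f_k f_{4k}},
\end{equation*}
where the last equality uses the elementary identities $(-q^k;q^k)_\infty = f_{2k}/f_k$ and $(-q^{2k};q^{2k})_\infty = f_{4k}/f_{2k}$, whose quotient yields $(-q^k;q^{2k})_\infty = f_{2k}^2/(f_k f_{4k})$. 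Equivalently, this is the single identity $f_1(-q) = f_2^3/(f_1 f_4)$ with $q$ replaced by $q^k$ in the underlying product expansion.

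With these rules in hand, I would apply the substitution $q \mapsto -q$ to \eqref{3dissPsi}. Among the factors appearing there, only $f_3$ and $f_9$ carry an odd index and must be rewritten as $f_3 \mapsto f_6^3/(f_3 f_{12})$ and $f_9 \mapsto f_{18}^3/(f_9 f_{36})$, while $f_6$ and $f_{18}$ stay fixed and the leading $q$ acquires a sign. Substituting into the first term $f_6 f_9^2/(f_3 f_{18})$ and simplifying the resulting eta-quotient should collapse to $f_3 f_{12} f_{18}^5/(f_6^2 f_9^2 f_{36}^2)$, while the second term $q\,f_{18}^2/f_9$ becomes $-q\,f_9 f_{36}/f_{18}$; together these give the claimed identity.

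The computation is entirely mechanical once the parity-dependent substitution rules are fixed, so I do not expect a genuine obstacle. The only place demanding care is the algebraic simplification of the first term, where the powers of $f_6, f_9, f_{18}, f_{36}$ must cancel correctly so that the exponent $5$ on $f_{18}$ survives. As a consistency check one could instead expand $\psi(-q) = f_1 f_4 / f_2$ from \eqref{17.02.1} and verify agreement of the two sides up to a modest power of $q$.
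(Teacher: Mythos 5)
Your proof is correct, but it takes a different route from the paper: the paper does not derive the identity at all, it simply cites Lemma 2 of da Silva--Sellers \cite{RS}, whereas you give a self-contained derivation from material already present in the paper. Your two ingredients are sound: the substitution $q \mapsto -q$ applied to the $3$-dissection \eqref{3dissPsi}, together with the parity rule $f_k(-q) = f_{2k}^3/(f_k f_{4k})$ for odd $k$ (which you justify correctly via $(-q^k;q^{2k})_\infty = f_{2k}^2/(f_k f_{4k})$) and $f_k(-q)=f_k$ for even $k$. The mechanical simplification does work out as you predict: the first term becomes
\begin{equation*}
\frac{f_6 \cdot f_{18}^6/(f_9^2 f_{36}^2)}{\bigl(f_6^3/(f_3 f_{12})\bigr) f_{18}} = \frac{f_3 f_{12} f_{18}^5}{f_6^2 f_9^2 f_{36}^2},
\qquad\text{and}\qquad
-q\,\frac{f_{18}^2}{f_{18}^3/(f_9 f_{36})} = -q\,\frac{f_9 f_{36}}{f_{18}},
\end{equation*}
exactly matching the claimed right-hand side. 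What your approach buys is independence from the external reference: the lemma becomes a corollary of Lemma \ref{L3} plus the elementary identity $f_1(-q)=f_2^3/(f_1 f_4)$, both of which are standard. What the paper's citation buys is brevity and the reader's ability to trace the identity to its published source; but as a proof your version is complete and arguably preferable in a self-contained exposition.
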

		
		\begin{proof}
			A proof of this identity can be seen in \cite[Lemma 2]{RS}.
		\end{proof}
		
		\begin{lemma} We have
			\begin{equation*}
				\phi(-q) =  \displaystyle\frac{f_{9}^2}{f_{18}} - 2q\displaystyle\frac{f_{3}f_{18}^2}{f_{6}f_{9}}
			\end{equation*}
			\label{L5}
		\end{lemma}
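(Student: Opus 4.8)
The plan is to produce the stated $3$-dissection directly from the series $\phi(-q)=\sum_{n=-\infty}^{\infty}(-q)^{n^2}=\sum_{n=-\infty}^{\infty}(-1)^{n}q^{n^2}$, the last equality because $n^2\equiv n\pmod 2$. I would sort the integers $n$ by their residue modulo $3$. The terms with $n\equiv 0\pmod 3$, say $n=3m$, contribute $\sum_{m}(-1)^{m}q^{9m^2}=\phi(-q^9)$; applying the product formula for $\phi(-q)$ recorded above with $q$ replaced by $q^9$ turns this into $f_9^2/f_{18}$, which is exactly the first term on the right-hand side.

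For the remaining residues I would write $n=3m\pm1$, so that $(-1)^n=-(-1)^m$ and $n^2=9m^2\pm 6m+1$. Replacing $m$ by $-m$ in the family $n=3m+1$ shows that it contributes the same series as $n=3m-1$, so the two families together give $-2q\sum_{m}(-1)^{m}q^{9m^2-6m}$. The remaining sum is a Ramanujan theta function: choosing $a=-q^3$ and $b=-q^{15}$ in $f(a,b)=\sum_{n}a^{n(n+1)/2}b^{n(n-1)/2}$ produces the exponent $\tfrac{3n(n+1)+15n(n-1)}{2}=9n^2-6n$ and the sign $(-1)^{\,n(n+1)/2+n(n-1)/2}=(-1)^{n^2}=(-1)^n$, so $\sum_{m}(-1)^{m}q^{9m^2-6m}=f(-q^3,-q^{15})$. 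At this stage I have the intermediate identity $\phi(-q)=\phi(-q^9)-2qf(-q^3,-q^{15})$.

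The main obstacle, and the step where sign and modulus bookkeeping must be watched, is rewriting $f(-q^3,-q^{15})$ as the quotient $f_3f_{18}^2/(f_6f_9)$. I would invoke the Jacobi triple product $f(a,b)=(-a;ab)_\infty(-b;ab)_\infty(ab;ab)_\infty$ with $a=-q^3$, $b=-q^{15}$, $ab=q^{18}$, obtaining $f(-q^3,-q^{15})=(q^3;q^{18})_\infty(q^{15};q^{18})_\infty f_{18}$. To collapse the two partial products I use that the exponents $3,9,15$ are precisely the residues congruent to $3$ modulo $6$ split according to their residue modulo $18$, whence $(q^3;q^{18})_\infty(q^9;q^{18})_\infty(q^{15};q^{18})_\infty=(q^3;q^6)_\infty=f_3/f_6$. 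Dividing out the absent factor $(q^9;q^{18})_\infty=f_9/f_{18}$ leaves $(q^3;q^{18})_\infty(q^{15};q^{18})_\infty=f_3f_{18}/(f_6f_9)$, so that $f(-q^3,-q^{15})=f_3f_{18}^2/(f_6f_9)$. Substituting both pieces into the intermediate identity yields the lemma. As a final check I would verify agreement of the first few coefficients, since $\phi(-q)=1-2q+2q^4-2q^9+\cdots$ and the right-hand side expands to the same series.
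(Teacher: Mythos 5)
Your proof is correct, but it is genuinely different from what the paper does: the paper offers no argument at all for this lemma, simply citing Eq.\ (14.3.4) of Hirschhorn's book \cite{H}. You instead give a self-contained derivation, and every step checks out: splitting $\sum_{n}(-1)^{n}q^{n^2}$ by the residue of $n$ modulo $3$ correctly produces $\phi(-q^9)=f_9^2/f_{18}$ from $n=3m$ (using $\phi(-q)=f_1^2/f_2$ with $q\mapsto q^9$) and $-2q\sum_m(-1)^m q^{9m^2-6m}$ from $n=3m\pm1$ (the sign $(-1)^{3m\pm1}=-(-1)^m$ and the $m\mapsto -m$ symmetry are handled properly); the identification of that sum as $f(-q^3,-q^{15})$ via the exponent computation $\tfrac{3n(n+1)+15n(n-1)}{2}=9n^2-6n$ and the sign $(-1)^{n^2}=(-1)^n$ is exact; and the Jacobi triple product step, with the factorization $(q^3;q^6)_\infty=(q^3;q^{18})_\infty(q^9;q^{18})_\infty(q^{15};q^{18})_\infty$ and the eta-quotient evaluations $(q^3;q^6)_\infty=f_3/f_6$, $(q^9;q^{18})_\infty=f_9/f_{18}$, correctly yields $f(-q^3,-q^{15})=f_3f_{18}^2/(f_6f_9)$. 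What your approach buys is independence from the external reference: the argument uses only the definitions and Entry 19 (the triple product) already invoked in this paper's preliminaries, so the lemma could be stated with a short proof rather than a citation. What the paper's route buys is brevity and a pointer to a standard source, where the identity is established by essentially the same dissection technique you rediscovered.
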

		
		\begin{proof}
			A proof of this identity can be seen in \cite[Eq. (14.3.4)]{H}.
		\end{proof}
		
		To close this section, we recall the following identity, which appears in \cite[Eq. (1.7.1)]{H}.
		
		\begin{lemma} We have
			\begin{equation*}
				f_1^3 = \displaystyle\sum_{n=0}^{\infty} (-1)^n (2n+1) q^{(n^2+n)/2}.
			\end{equation*}
			\label{L6}
		\end{lemma}


		\section{Proof of Theorem \ref{Th1}}
		\label{Thm1}
		
		In many occasions below, we will make use of the following elementary fact: for any positive integers $k, m$, and $n$,
		$$f_{n}^{2^km} \equiv f_{2n}^{2^{k-1}m} \pmod{2^k}.$$
		
		We know from \cite[Eq. (4.12)]{JLX} that
		\begin{align}
			\sum_{n=0}^{\infty}\overline{spt2}(4n+2)q^n & \equiv 2\frac{f_8^2}{f_4}-\frac{f_2^3f_4^2}{f_1^2} \nonumber \\
			& \equiv 2f_4^3 - f_2^3 \left( \frac{f_4}{f_1} \right)^2 \pmod{4}. \label{eq1}
		\end{align}
		Using \eqref{3diss1} and \eqref{3diss2}, we obtain
		\begin{align*}
			\sum_{n=0}^{\infty}\overline{spt2}(4n+2)q^n & \equiv 2\frac{f_{24}f_{36}^6}{f_{12}f_{72}^3} + 2q^4f_{36}^3 - \left( \frac{f_{12}f_{18}^6}{f_{6}f_{36}^3} - 3q^2f_{18}^3 \right)\left( \frac{f_{12}^2f_{18}^8}{f_{3}^6f_{36}^4} \right. \\ 
			& \left. \ \ \  +q^2\frac{f_6^4f_{9}^6f_{36}^2}{f_{3}^8f_{18}^2} + 2q\frac{f_{6}^2f_{9}^3f_{12}f_{18}^2}{f_{3}^7f_{36}} \right) \pmod{4},
		\end{align*}
		which, after extracting the terms of the form $q^{3n+1}$, yields
		\begin{equation*}
			\sum_{n=0}^{\infty}\overline{spt2}(12n+6)q^{3n+1} \equiv 2q^4f_{36}^3 + 2q\frac{f_{6}f_{9}^3f_{12}^2f_{18}^8}{f_{3}^7f_{36}^4} +3q^4\frac{f_{6}^4f_{9}^6f_{18}^3f_{36}^2}{f_{3}^8f_{18}^2} \pmod{4}.
		\end{equation*}
		Dividing by $q$ and replacing $q^3$ by $q$, we obtain
		\begin{align*}
			\sum_{n=0}^{\infty}\overline{spt2}(12n+6)q^{n} & \equiv 2qf_{12}^3 + 2\frac{f_{2}f_{3}^3f_{4}^2f_{6}^8}{f_{1}^7f_{12}^4} +3q\frac{f_{2}^4f_{3}^6f_{6}^3f_{12}^2}{f_{1}^8f_{6}^2} \\
			& \equiv 2qf_{12}^3 + 2\frac{f_3^3f_4^2}{f_1f_2^2} + 3qf_3^2f_6^5 \\
			& \equiv  2qf_{12}^3 +3qf_3^2f_6^5 + 2f_3^3 \frac{f_2^2}{f_1} \pmod{4}.
		\end{align*}
		Thanks to Lemma \ref{L3}, we see that the right-hand side of the congruence above does not have terms of the form $q^{3n+2}$, which proves \eqref{cg1}.
		
		We now proceed to the proof of \eqref{cg2}. We start by using \eqref{2diss2} to extract the even part of \eqref{eq1}, which yields
		\begin{align*}
			\sum_{n=0}^{\infty}\overline{spt2}(8n+2)q^{2n} & \equiv 2f_4^3 - \frac{f_4^2f_8^5}{f_2^2f_{16}^2} \\
			& \equiv 2f_4^3 -f_2^2f_8 \pmod{4}.
		\end{align*}
		After replacing $q^2$ by $q$, we get
		\begin{align}
			\sum_{n=0}^{\infty}\overline{spt2}(8n+2)q^{n} & \equiv 2f_2^3 -f_1^2f_4 \pmod{4}. \label{eq2}
		\end{align}
		Now we make use of \eqref{2diss1} to extract the even parts on both sides of the above congruence:
		\begin{align*}
			\sum_{n=0}^{\infty}\overline{spt2}(16n+2)q^{2n} & \equiv 2f_2^3 -\frac{f_2f_8^5}{f_4f_{16}^2} \pmod{4}.
		\end{align*}
		After replacing $q^2$ by $q$, we are left with
		\begin{align}
			\sum_{n=0}^{\infty}\overline{spt2}(16n+2)q^{n} & \equiv 2f_1^3 -\frac{f_1f_4^5}{f_2f_{8}^2} \nonumber \\
			& \equiv 2f_1^3 - \frac{f_1f_4}{f_2} \nonumber \\ 
			& \equiv 2f_1^3 - \psi(-q) \pmod{4}, \label{eq3}
		\end{align}
		using \eqref{17.02.1}. Thanks to \eqref{3diss1} and Lemma \ref{L4} we see that the right-hand side of the last congruence does not have terms of the form $q^{3n+2}$, which completes the proof of \eqref{cg2}.

		In order to prove \eqref{cg4}, we rewrite \eqref{eq2} as
		\begin{align*}
			\sum_{n=0}^{\infty}\overline{spt2}(8n+2)q^{n} & \equiv 2f_2^3 -\frac{f_1^2}{f_2}f_2f_4 \pmod{4}.
		\end{align*}
		Combining \eqref{3diss1}, Lemma \ref{L5}, and \eqref{3diss3} we can extract the terms of the form $q^{3n+2}$ on both sides of the last congruence, which yields the following:
		\begin{align*}
			\sum_{n=0}^{\infty}\overline{spt2}(24n+18)q^{3n+2} & \equiv 2q^2f_{18}^3 + q^2f_9^2f_{36} \pmod{4}.
		\end{align*}
		Dividing by $q^2$ and replacing $q^3$ by $q$, we are left with
		\begin{align*}
			\sum_{n=0}^{\infty}\overline{spt2}(24n+18)q^{n} & \equiv 2f_{6}^3 + f_3^2f_{12} \pmod{4}.
		\end{align*}
		Since there are no terms of the form $q^{3n+1}$ in $2f_{6}^3 + f_3^2f_{12}$, we obtain \eqref{cg4}.

		
		\section{Proof of Theorem \ref{Th2}}
		\label{Thm2}
		
		We begin by recalling an interesting internal congruence satisfied by $\overline{spt2}(n)$ (see \cite[Theorem 1.7]{JLX}) that will serve as an important component in our remaining proofs: for $n \geq 0$,
		\begin{equation}
			\overline{spt2}(8n+5) \equiv \overline{spt2}(16n+10) \pmod{4}.
			\label{IntCong1}
		\end{equation}

		\subsection{Proof of \eqref{if1}}
		
		The congruence in \eqref{if1} follows from a straightforward proof by induction on $j.$  
		First, we note that the basis case, $j=0,$ is simply the statement that, for all $n\geq 0,$ 
		$$
		\overline{spt2}(16n+14) \equiv 0 \pmod{4}.
		$$
		This is Equation \eqref{J2} above, which appears in \cite[Theorem 1.8]{JLX}.  
		
		Next, we assume that the statement is true for some $j\geq 0.$  We then wish to prove that 
		$$
		\overline{spt2}\left( 2^{j+5}n + 7\cdot 2^{j+2} \right) \equiv 0 \pmod{4}
		$$
		for all $n\geq 0$ as well.  Note that 
		\begin{align*}
			2^{j+5}n + 7\cdot 2^{j+2} & =
			16\left( 2^{j+1}n + 7\cdot 2^{j-2} - \frac{5}{8} \right) + 10.
		\end{align*}
		Thanks to (\ref{IntCong1}), we know that, for all $n\geq 0,$
		$$\overline{spt2}(8n+5) \equiv \overline{spt2}(16n+10) \pmod{4}$$ 
		Thus, 
		\begin{align*}
			\overline{spt2}\left( 2^{j+5}n + 7\cdot 2^{j+2} \right)
			&=
			\overline{spt2}\left( 16\left( 2^{j+1}n + 7\cdot 2^{j-2} - \frac{5}{8} \right) + 10 \right)
			\\
			& \equiv 
			\overline{spt2}\left( 8\left( 2^{j+1}n + 7\cdot 2^{j-2} - \frac{5}{8} \right) + 5 \right) \pmod{4}\\
			&= 
			\overline{spt2}\left( 2^{j+4}n + 7\cdot 2^{j+1} \right) \\
			&\equiv 0 \pmod{4} 
		\end{align*}
		by the induction hypothesis.  The result follows.

		\subsection{Proof of \eqref{if2}}
		
		The proof of \eqref{if1} also follows from a straightforward proof by induction on $j.$  
		First, we note that the basis case, $j=0,$ is the same as saying that, for all $n\geq 0,$ 
		$$
		\overline{spt2}(36n+30) \equiv 0 \pmod{4}.
		$$
		This is Equation \eqref{cg1} in Theorem \ref{Th1}.  
		
		Next, we assume that the statement is true for some $j\geq 0.$  We then wish to prove that 
		$$
		\overline{spt2}\left( 2^{j+3}9n + 15\cdot 2^{j+2} \right) \equiv 0 \pmod{4}
		$$
		for all $n\geq 0$ as well.  Note that 
		\begin{align*}
			2^{j+3}9n + 15\cdot 2^{j+2} & =
			16\left( 2^{j-1}9n + 15\cdot 2^{j-2} - \frac{5}{8} \right) + 10.
		\end{align*}
		Thanks to (\ref{IntCong1}), we have 
		\begin{align*}
			\overline{spt2}\left( 2^{j+3}9n + 15\cdot 2^{j+2} \right)
			&=
			\overline{spt2}\left( 16\left( 2^{j-1}9n + 15\cdot 2^{j-2} - \frac{5}{8} \right) + 10 \right)
			\\
			& \equiv 
			\overline{spt2}\left( 8\left( 2^{j-1}9n + 15\cdot 2^{j-2} - \frac{5}{8} \right) + 5 \right) \pmod{4}\\
			&= 
			\overline{spt2}\left( 2^{j+2}9n + 15\cdot 2^{j+1} \right) \\
			&\equiv 0 \pmod{4} 
		\end{align*}
		by the induction hypothesis, which completes the proof of \eqref{if2}.

		\subsection{Proof of \eqref{if3}}
		
		Here we also proceed by induction on $j.$ The basis case, $j=0,$ is the same as saying that, for all $n\geq 0,$ 
		$$
		\overline{spt2}(48n+34) \equiv 0 \pmod{4}.
		$$
		This is Equation \eqref{cg2} in Theorem \ref{Th1}.  
		
		Next, we assume that the statement is true for some $j\geq 0.$  Then we prove that 
		$$
		\overline{spt2}\left( 2^{j+5}3n + 17\cdot 2^{j+2} \right) \equiv 0 \pmod{4}
		$$
		for all $n\geq 0$ as well.  Note that 
		\begin{align*}
			2^{j+5}3n + 17\cdot 2^{j+2} & =
			16\left( 2^{j+1}3n + 17\cdot 2^{j-2} - \frac{5}{8} \right) + 10.
		\end{align*}
		Thanks to (\ref{IntCong1}), we have 
		\begin{align*}
			\overline{spt2}\left( 2^{j+5}3n + 17\cdot 2^{j+2} \right)
			&=
			\overline{spt2}\left( 16\left( 2^{j+1}3n + 17\cdot 2^{j-2} - \frac{5}{8} \right) + 10 \right)
			\\
			& \equiv 
			\overline{spt2}\left( 8\left( 2^{j+1}3n + 17\cdot 2^{j-2} - \frac{5}{8} \right) + 5 \right) \pmod{4}\\
			&= 
			\overline{spt2}\left( 2^{j+4}3n + 17\cdot 2^{j+1} \right) \\
			&\equiv 0 \pmod{4} 
		\end{align*}
		by the induction hypothesis, which completes the proof of \eqref{if3}.

		\subsection{Proof of \eqref{if6}}
		
		Proceeding by induction on $j$, we see that the basis case, $j=0,$ is the same as saying that, for all $n\geq 0,$ 
		$$
		\overline{spt2}(72n+42) \equiv 0 \pmod{4}.
		$$
		This is Equation \eqref{cg4} in Theorem \ref{Th1}.  
		
		Next, we assume that the statement is true for some $j\geq 0.$  Then we prove that 
		$$
		\overline{spt2}\left( 2^{j+4}9n + 21\cdot 2^{j+2} \right) \equiv 0 \pmod{4}
		$$
		for all $n\geq 0$ as well.  Note that 
		\begin{align*}
			2^{j+4}9n + 21\cdot 2^{j+2} & =
			16\left( 2^{j}9n + 21\cdot 2^{j-2} - \frac{5}{8} \right) + 10.
		\end{align*}
		Thanks to (\ref{IntCong1}), we have 
		\begin{align*}
			\overline{spt2}\left( 2^{j+4}9n + 21\cdot 2^{j+2} \right)
			&=
			\overline{spt2}\left( 16\left( 2^{j}9n + 21\cdot 2^{j-2} - \frac{5}{8} \right) + 10 \right)
			\\
			& \equiv 
			\overline{spt2}\left( 8\left( 2^{j}9n + 21\cdot 2^{j-2} - \frac{5}{8} \right) + 5 \right) \pmod{4}\\
			&= 
			\overline{spt2}\left( 2^{j+3}9n + 21\cdot 2^{j+1} \right) \\
			&\equiv 0 \pmod{4} 
		\end{align*}
		by the induction hypothesis, which completes the proof of \eqref{if6}.
		
		\subsection{Proof of \eqref{if4}}
		
		Proceeding by induction on $j$, we see that the basis case, $j=0,$ is the same as saying that, for all $n\geq 0,$ 
		$$
		\overline{spt2}(80n+34) \equiv 0 \pmod{4}.
		$$
		This follows directly from \cite[Theorem 1.4]{JLX}. Indeed, since $k(k+1)/2 \not\equiv 2 \pmod{5}$, we see that 
		$$\overline{spt2}(80n+34) = \overline{spt2}(16\cdot (5n+2)+2) \equiv 0 \pmod{4}$$ 
		by \cite[Eq. (1.4)]{JLX}.}

	Next, we assume that the statement is true for some $j\geq 0.$  Then we prove that 
	$$
	\overline{spt2}\left( 2^{j+5}5n + 17\cdot 2^{j+2} \right) \equiv 0 \pmod{4}
	$$
	for all $n\geq 0$ as well.  Note that 
	\begin{align*}
		2^{j+5}5n + 17\cdot 2^{j+2} & =
		16\left( 2^{j+1}5n + 17\cdot 2^{j-2} - \frac{5}{8} \right) + 10.
	\end{align*}
	Thanks to (\ref{IntCong1}), we have 
	\begin{align*}
		\overline{spt2}\left( 2^{j+5}5n + 17\cdot 2^{j+2} \right)
		&=
		\overline{spt2}\left( 16\left( 2^{j+1}5n + 17\cdot 2^{j-2} - \frac{5}{8} \right) + 10 \right)
		\\
		& \equiv 
		\overline{spt2}\left( 8\left( 2^{j+1}5n + 17\cdot 2^{j-2} - \frac{5}{8} \right) + 5 \right) \pmod{4}\\
		&= 
		\overline{spt2}\left( 2^{j+4}5n + 17\cdot 2^{j+1} \right) \\
		&\equiv 0 \pmod{4} 
	\end{align*}
	by the induction hypothesis, which completes the proof of \eqref{if4}.
	
	\subsection{Proof of \eqref{if5}}
	
	Proceeding by induction on $j$, we see that the basis case, $j=0,$ is the same as saying that, for all $n\geq 0,$ 
	$$
	\overline{spt2}(80n+66) \equiv 0 \pmod{4}.
	$$
	This follows directly from \cite[Theorem 1.4]{JLX}.  Indeed, since $k(k+1)/2 \not\equiv  4 \pmod{5}$, we see that 
	$$\overline{spt2}(80n+66) = \overline{spt2}(16\cdot (5n+4)+2) \equiv 0 \pmod{4},$$ 
	by \cite[Eq. (1.4)]{JLX}.
	
	Next, we assume that the statement is true for some $j\geq 0.$  Then we prove that 
	$$
	\overline{spt2}\left( 2^{j+5}5n + 33\cdot 2^{j+2} \right) \equiv 0 \pmod{4}
	$$
	for all $n\geq 0$ as well.  Note that 
	\begin{align*}
		2^{j+5}5n + 33\cdot 2^{j+2} & =
		16\left( 2^{j+1}5n + 33\cdot 2^{j-2} - \frac{5}{8} \right) + 10.
	\end{align*}
	Thanks to (\ref{IntCong1}), we have 
	\begin{align*}
		\overline{spt2}\left( 2^{j+5}5n + 33\cdot 2^{j+2} \right)
		&=
		\overline{spt2}\left( 16\left( 2^{j+1}5n + 33\cdot 2^{j-2} - \frac{5}{8} \right) + 10 \right)
		\\
		& \equiv 
		\overline{spt2}\left( 8\left( 2^{j+1}5n + 33\cdot 2^{j-2} - \frac{5}{8} \right) + 5 \right) \pmod{4}\\
		&= 
		\overline{spt2}\left( 2^{j+4}5n + 33\cdot 2^{j+1} \right) \\
		&\equiv 0 \pmod{4} 
	\end{align*}
	by the induction hypothesis, which completes the proof of \eqref{if4}.


	\section{Proof of Theorem \ref{Th3}}
	\label{Thm3}
	
	Initially we recall the following congruence (see \cite[Eq. (4.16)]{JLX}):
	\begin{align*}
		\sum_{n=0}^{\infty}\overline{spt2}(16n+8)q^{n} & \equiv -\frac{f_2^3f_4^2}{f_1^2} \pmod{4}.
	\end{align*}
	Using \eqref{2diss2}, we get
	\begin{align*}
		\sum_{n=0}^{\infty}\overline{spt2}(32n+24)q^{2n+1} & \equiv 2q\frac{f_4^4f_{16}^2}{f_2^2f_8} \pmod{4},
	\end{align*}
	which, after dividing by $q$ and replacing $q^2$ by $q$, yields
	\begin{align}
		\sum_{n=0}^{\infty}\overline{spt2}(32n+24)q^{n} & \equiv 2\frac{f_2^4f_{8}^2}{f_1^2f_4} \equiv 2f_2^3f_4^3 \pmod{4}.
		\label{eq4}
	\end{align}
	
	Using Lemma \ref{L6}, it follows from \eqref{eq4} that
	\begin{align*}
		\sum_{n=0}^{\infty} \overline{spt2}(32n+24)q^{n} & \equiv 2 \sum_{k=0}^{\infty}\sum_{l = 0}^{\infty} q^{k(k+1)+2l(l+1)} \pmod{4}.
	\end{align*}
	Note that $n = k(k+1)+2l(l+1)$ is equivalent to writing $4n+3 = (2k+1)^2 + 2(2l+1)^2$. Thus, $\overline{spt2}(32n+24) \equiv 0 \pmod{4}$ if $4n+3$ is not of the form $x^2+2y^2$. Since $p \equiv5 \mbox{ or } 7 \pmod{8}$ we know that $\displaystyle \left( \frac{-2}{p}\right) = -1$, i.e., $-2$ is a quadratic nonresidue modulo $p$. This implies that $\nu_p(N)$ is even if $N$ is of the form $2x^2+y^2$. However, here $n = p^{2k+1}m + 3(p^{2k+2}-1)/{4}$. Then
	$$4n+3= 4p^{2k+1}m + 3p^{2k+2} = p^{2k+1}(4m+3p).$$
	Therefore $\nu_p(4n+3)$ is odd and the result follows.

	\section{Concluding remarks	}\label{sec13}
	
	
	In the work above, we have accomplished our primary goal, which was to prove new Ramanujan--like congruences satisfied by $\overline{spt2}(n)$ modulo $4$, including some infinite families of congruences. With that said, we admit that our results above are not exhaustive; computational evidence indicates that there are more Ramanujan--like congruences modulo $4$  satisfied by $\overline{spt2}(n)$ which are not covered by our results above. We leave it to the interested reader to consider proving additional Ramanujan--like congruences which are satisfied by $\overline{spt2}(n)$.


\end{document}